\newcommand{\daTheoremname}{Theorem\xspace}
\newcommand{\daLemmaname}{Lemma\xspace}
\newcommand{\daPropositionname}{Proposition\xspace}
\newcommand{\daCorollaryname}{Corollary\xspace}
\newcommand{\daRemarkname}{Remark\xspace}
\newcommand{\daDefinitionname}{Definition\xspace}
\numberwithin{equation}{section}
\theoremstyle{plain}
\newtheorem{theorem}{\daTheoremname{}}[section]
\newtheorem{corollary}[theorem]{\daCorollaryname{}}
\theoremstyle{remark}
\newtheorem{Remark}[theorem]{\daRemarkname{}}
\newenvironment{remark}{\begin{Remark}}{\qed\end{Remark}}
\theoremstyle{definition}
\newtheorem{Definition}[theorem]{\daDefinitionname{}}
\newenvironment{definition}{\begin{Definition}}{\qed\end{Definition}}
\newcommand{\textupmd}[1]{\textup{\textmd{#1}}}
\newcommand{\SpDim}{N}
\newcommand{\numberspacefont}{\boldsymbol}
\newcommand{\R}{\numberspacefont{R}}
\newcommand{\RN}{\R^{\SpDim}}
\newcommand{\abs}[1]{\lvert#1\rvert}
\newcommand{\Abs}[1]{\left|#1\right|}
\newcommand{\@di}{\textupmd{d}}
\newcommand{\di}{\,\@di}
\newcommand{\grad}{\operatorname{\nabla}}
\newcommand{\bdr}[1]{\partial #1}
\newcommand{\der}[3][1]{
\ifthenelse{ \equal{#1}{1} }{ \def\@deraux{} }{ \def\@deraux{#1} }
\frac{\@di^{\@deraux}#2}
            {\@di #3^{\@deraux}}}
\newcommand{\eps}{\varepsilon}
\begin{document}

\title
{Some remarks on the Sobolev inequality in Riemannian
manifolds}%
\author{Daniele Andreucci}
\address{Department of Basic and Applied Sciences for Engineering\\Sapienza University of Rome\\via A. Scarpa 16 00161 Rome, Italy}
\email{daniele.andreucci@sbai.uniroma1.it}
\thanks{The first author is member of the Gruppo Nazionale
  per la Fisica Matematica (GNFM) of the Istituto Nazionale di Alta Matematica
  (INdAM)}
\author{Anatoli F. Tedeev}
\address{South Mathematical Institute of VSC RAS\\Vladikavkaz, Russian Federation}
\email{a\_tedeev@yahoo.com}
\thanks{AMS Classification: 46E35, 53C21.
\\
The first author is member of Italian
G.N.F.M.-I.N.d.A.M.}

\begin{abstract}
  We investigate Sobolev and Hardy inequalities, specifically weighted
  Minerbe's type estimates, in noncompact complete connected
  Riemannian manifolds whose geometry is described by an isoperimetric
  profile. In particular, we assume that the manifold satisfies the
  $p$-hyperbolicity property, stated in terms of a necessary integral Dini
  condition on the isoperimetric profile.  Our method seems to us to
  combine sharply the knowledge of the isoperimetric profile and the
  optimal Bliss type Hardy inequality depending on the geometry of the
  manifold.  We recover the well known best Sobolev constant in the Euclidean
  case.
\end{abstract}

\date{\today}

\maketitle

\section{Introduction}\label{s:intro}
The well known Sobolev inequality in the Euclidean space $\RN$ reads
\begin{equation}
  \label{eq:sobec}
  \Big(
  \int_{\RN}
  \abs{u}^{p^{*}}
  \di x
  \Big)^{1/p^{*}}
  \le
  S(N,p)
  \Big(
  \int_{\RN}
  \abs{\grad u}^{p}
  \di x
  \Big)^{1/p}
  \,,
\end{equation}
where $p^{*}=Np/(N-p)$, for all $u\in C^{\infty}_{0}(\RN)$; we always assume here $1<p<N$. The best constant $S$ was found in \cite{Aubin:1976,Talenti:1976b} as
\begin{equation}
  \label{eq:sobec_bc}
  S(N,p)
  =
  \frac{1}{\omega_{N}^{1/N}N}
  \Big[
  \frac{N(p-1)}{N-p}
  \Big]^{(p-1)/p}
  \,
  \Big[
  \frac{
    \Gamma(N+1)
  }{
    N\Gamma(N/p)\Gamma(1+N-N/p)
  }
  \Big]^{1/N}
  \,,
\end{equation}
where $\Gamma$ is the standard Gamma function and $\omega_{N}$ denotes the volume of the unit ball in $\RN$.

The literature on the subject of Sobolev inequalities and the choice of constants therein is very large; we refer to \cite{AubinNPRG, Mazja:ss}.

Let us make clear what the difficulty is in the case we have in mind,
that is the lack of homogeneity, which is instead guaranteed in the
Euclidean case. Consider a product manifold given by
$M=M_{0}\times \R^{k}$, where $M_{0}$ is a compact Riemannian manifold
of dimension $m$ and $\R^{k}$ the Euclidean space of dimension
$k$. Clearly its topological or local dimension is $N=m+k$, but its
dimension at infinity is lower, and in fact equals $k$. By this we
mean, more exactly, that if $\Omega\subset M$ is a smooth set with
volume $\abs{\Omega}_{N}=v$, its boundary has area satisfying
$\abs{\bdr{\Omega}}_{N-1}\ge c v^{(N-1)/N}$ if $v$ is small, but
$\abs{\bdr{\Omega}}_{N-1}\ge c v^{(k-1)/k}$ if $v$ is large (for a
suitable $c>0$).  The dimension at infinity $k$ is related to the
range of $p$ for which a Sobolev-like inequality is valid; this
amounts essentially to $p<k$ in simple cases and is strictly connected
to the property of $p$-hyperbolicity of the manifold; see
\eqref{eq:hyper} below.

Here we introduce a streamlined method of proof of Sobolev-like inequalities in Riemannian manifolds which seems to tackle optimally this setting, in terms of the isoperimetric information just exemplified (Theorem~\ref{t:sobol}); it yields the constant $S(N,p)$ in the Euclidean case; see Subsection~\ref{s:examp} for examples of other manifolds where the needed information is completely available. The inequality is of the type obtained in \cite{Minerbe:2009}. 

The connections between the validity of Sobolev-like inequalities and isoperimetric profiles (defined as the optimal $h$ in \eqref{eq:isoper_a} below) is well known; let us briefly recall that it appeared in \cite{Federer:Fleming:1960, Mazja:ss}. The method of \cite{Mazja:ss} allows one to reduce the proof of multidimensional Sobolev inequalities to one-dimensional Hardy type inequalities, and was also applied to derive Hardy inequalities in Riemannian manifolds by \cite{Miklyukov:Vuorinen:1999}. The symmetrization approach has also been used extensively in this field; we refer for example to \cite{Aubin:1976, Cianchi:2007, Martin:Milman:2014, Talenti:1976b, Xia:2001}; the optimality of the constant in Sobolev-like inequalities has been analyzed also, with alternative approaches, in \cite{Berchio:Ganguly:Grillo:2017, Cordero:Nazaret:Villani:2004, Hebey:Vaugon:1995, Ledoux:1999, Nazarov:2011}.

In what follows $(M,g)$ is a complete, connected Riemannian $N$-di\-men\-sional
manifold with infinite volume, $\di\mu$ is the volume form associated to the
metric $g$ , $\grad u$ denotes the gradient of a function $u$ with respect to
the metric $g$. Denote by $d(x)$ for $x\in M$ the distance from a fixed point
$x_{0}\in M$, and by $V(R)$ the volume of the geodesic ball $B_{R}(x_{0})$, $R>0$.

\begin{definition}
  \label{d:isoper}
  We say that $M$ satisfies the $h$-isoperimetric inequality if:
  \\
  i) for
  any measurable subset $U\subset M$ with Lipschitz continuous boundary $\partial U$
  \begin{equation}
    \label{eq:isoper_a}
    \abs{\partial U}_{N-1}
    \ge
    h(\mu(U))
    \,,
  \end{equation}
  where $h(s)$ is a given increasing function for $s\ge0$, $h(0)=0$;
  \\
  ii) the function $w$ defined by
  \begin{equation}
    \label{eq:isoper_b}
    w(s)
    =
    \frac{C_{N}s^{\frac{N-1}{N}}}{h(s)}
    \,,
    \qquad
    s>0
    \,,
  \end{equation}
  is nondecreasing, where $C_{N}>0$ is a given arbitrary constant.
\end{definition}
The constant $C_{N}$ in \eqref{eq:isoper_b} is essentially introduced for the sake of comparison with examples. Roughly speaking it is selected so that $h(s)\sim C_{N} s^{(N-1)/N}$ for small $s$; in the Euclidean case $C_{N}=N\omega_{N}^{1/N}$.

One of the main technical difficulties in investigating sharp Sobolev
constants in Riemannian manifolds is the already remarked
inhomogeneous character of the isoperimetric function $h$, which makes
difficult the explicit determination of isoperimetric regions (see \cite{Burago:Zalgaller:1988,Ros:2005} for classical references).
A property which certainly is necessary to us is $p$-hyperbolicity. This
essentially amounts to the existence of a symmetric positive Green
function $G_{x}$ for the $p$-Laplacian with pole at $x$, for every
$x\in M$.  For other  definitions of $p$-hyperbolicity and comments on its necessity we
refer to \cite{DAmbrosio:Dipierro:2014} (see also \cite{Miklyukov:Vuorinen:1999}). Here we need
\begin{equation}
  \label{eq:hyper}
  \int_{1}^{\infty}
  \frac{\di t}{h(t)^{p/(p-1)}}
  <
  +\infty
  \,,
\end{equation}
which is a geometrical version of the $p$-hyperbolicity assumption (see \cite{Grigoryan:1999a,Troyanov:1999}).

Next, we state some assumptions connected with
the validity of Sobolev and Hardy inequalities, i.e.,
\begin{equation}
  \label{eq:B1}
  B_{1}
  =
  \sup_{s>0}
  \left(
    \int_{0}^{s}
    \left[  
      \frac{h(\tau)}{C_{N}\tau^{\frac{N-1}{N}}}
    \right]^{p^{*}}
    \di\tau
  \right)^{1/p^{*}}
  \left(
    \int_{s}^{\infty}
    \frac{1}{h(\tau)^{\frac{p}{p-1}}}
    \di\tau
  \right)^{(p-1)/p}
  <
  +\infty
  \,,
\end{equation}
and
\begin{equation}
  \label{eq:B2}
  B_{2}
  =
  \sup_{s>0}
  \left(
    \int_{0}^{s}
    \frac{1}{\left(  V^{(-1)}(\tau)\right)^{p}}
    \di\tau
  \right)^{1/p}
  \left(
    \int_{s}^{\infty}
    \frac{1}{h(\tau)^{\frac{p}{p-1}}}
    \di\tau
  \right)^{(p-1)/p}
  <
  +\infty
  \,.
\end{equation}
Define also $k_{p,p}=p/(p-1)^{(p-1)/p}$, and
\begin{equation}
  \label{eq:kqp}
  k_{q,p}
  =
  \left(
    \frac{r}{\textup{Beta}(1/r,(q-1)/r)}
  \right)^{1/p-1/q}
  \,,
  \qquad
  \text{for $q>p$,}
\end{equation}
where $r=q/p-1$; here the Beta function is defined by
\begin{equation*}
  \textup{Beta}(a,b)
  =
  \int_{0}^{1}
  x^{a-1}(1-x)^{b-1}
  \di x
  \,.
\end{equation*}
Since $\textup{Beta}(a,b)=\Gamma(a)\Gamma(b)/\Gamma(a+b)$, $a\Gamma(a)=\Gamma(a+1)$, we have
\begin{equation*}
  k_{p^{*},p}
  =
  \frac{1}{N^{1/N}}
  \left\{
    \frac{\Gamma(N+1)}{\Gamma(N/p)\Gamma(1+N-N/p)}
  \right\}^{1/N}
  \,.
\end{equation*}

Next we state our main result, in the spirit of \cite{Minerbe:2009}; however our set of assumptions is different from the one there.

\begin{theorem}
  \label{t:sobol}
  Assume that $M$ satisfies the $h$-isoperimetric inequality and \eqref{eq:hyper} holds true; let
  $u\in C_{0}^{\infty}(M)$.
  \\
  i) If \eqref{eq:B1} holds, then
  \begin{equation}
    \label{eq:sobol_n}
    \left(
      \int_{M}
      \abs{ u}^{p^{*}}
      w(V(d(x))^{-p^{*}}
      \di\mu
    \right)^{1/p^{*}}
    \le
    C_{1}
    \left(
      \int_{M}
      \abs{ \grad u}^{p}
      \di\mu
    \right)^{1/p}
    \,.
\end{equation}
ii) If \eqref{eq:B2} holds, then
\begin{equation}
  \label{eq:sobol_nn}
  \int_{M}
  \frac{\abs{ u}^{p}}{d(x)^{p}}
  \di\mu
  \le
  C_{2}
  \int_{M}
  \abs{\grad u}^{p}
  \di\mu
  \,,
\end{equation}
Here $C_{1}=B_{1}k_{p^{*},p}$, and $C_{2}=B_{2}^{p}p^{p}/(p-1)^{p-1}$.
\end{theorem}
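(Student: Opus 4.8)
\emph{Strategy.} The plan is to run the Maz'ya--Talenti scheme: to reduce both multidimensional inequalities to one-dimensional weighted Hardy inequalities for the decreasing rearrangement of $u$, in which the geometry enters only through $h$ and $V$. I would write $m(t)=\mu(\{\abs{u}>t\})$ for the distribution function of $\abs{u}$ and let $u^{*}$ be its decreasing rearrangement (the generalized inverse of $m$), extended by $0$ so that $u^{*}(\infty)=0$. The right-hand sides of i) and ii) are then handled identically; only the left-hand weight differs.

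\emph{Reduction of the gradient term.} I would apply the coarea formula twice: it gives $-m'(t)=\int_{\{\abs{u}=t\}}\abs{\grad u}^{-1}\di\sigma$ and $\int_{M}\abs{\grad u}^{p}\di\mu=\int_{0}^{\infty}\bigl(\int_{\{\abs{u}=t\}}\abs{\grad u}^{p-1}\di\sigma\bigr)\di t$. On each level set, Hölder's inequality with exponents $p,\,p/(p-1)$ yields
\begin{equation*}
  \abs{\bdr{\{\abs{u}>t\}}}_{N-1}
  =\int_{\{\abs{u}=t\}}\di\sigma
  \le\Bigl(\int_{\{\abs{u}=t\}}\abs{\grad u}^{p-1}\di\sigma\Bigr)^{1/p}(-m'(t))^{(p-1)/p}\,,
\end{equation*}
and combining this with the isoperimetric inequality \eqref{eq:isoper_a} in the form $h(m(t))\le\abs{\bdr{\{\abs{u}>t\}}}_{N-1}$ gives $\int_{\{\abs{u}=t\}}\abs{\grad u}^{p-1}\di\sigma\ge h(m(t))^{p}/(-m'(t))^{p-1}$. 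Integrating in $t$ and substituting $s=m(t)$ (so that $\abs{(u^{*})'(s)}=1/(-m'(t))$) produces the basic lower bound, valid for both parts,
\begin{equation*}
  \int_{M}\abs{\grad u}^{p}\di\mu
  \ge
  \int_{0}^{\infty}h(s)^{p}\,\abs{(u^{*})'(s)}^{p}\di s\,.
\end{equation*}

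\emph{Reduction of the left-hand sides.} Here I would invoke the Hardy--Littlewood inequality $\int_{M}g\,\rho\di\mu\le\int_{0}^{\infty}g^{*}\rho^{*}\di s$. Taking $g=\abs{u}^{p^{*}}$ (resp. $\abs{u}^{p}$), its rearrangement is $(u^{*})^{p^{*}}$ (resp. $(u^{*})^{p}$). Both weights are nonincreasing functions of $d(x)$, and since $\mu(\{d(x)<R\})=V(R)$ the decreasing rearrangement of $F(d(x))$ equals $F(V^{(-1)}(s))$. As $w$ is nondecreasing by \eqref{eq:isoper_b}, the weight $w(V(d(x)))^{-p^{*}}$ rearranges to $w(s)^{-p^{*}}=[h(s)/(C_{N}s^{(N-1)/N})]^{p^{*}}$, while $d(x)^{-p}$ rearranges to $(V^{(-1)}(s))^{-p}$. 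Writing $f=u^{*}$ and using $f(s)=\int_{s}^{\infty}\abs{f'}\di\tau$, parts i) and ii) are thereby reduced to the one-dimensional inequalities
\begin{equation*}
  \Bigl(\int_{0}^{\infty}f(s)^{p^{*}}\Bigl[\frac{h(s)}{C_{N}s^{(N-1)/N}}\Bigr]^{p^{*}}\di s\Bigr)^{1/p^{*}}
  \le C_{1}\Bigl(\int_{0}^{\infty}h(s)^{p}\abs{f'(s)}^{p}\di s\Bigr)^{1/p}
\end{equation*}
and the analogue with $p^{*}$ replaced by $p$ and the left weight replaced by $(V^{(-1)}(s))^{-p}$.

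\emph{The one-dimensional inequality and the main obstacle.} These are weighted Hardy inequalities for the operator $f\mapsto\int_{s}^{\infty}\abs{f'}$. By the Bradley--Kokilashvili--Maz'ya--Muckenhoupt criterion they hold precisely when the relevant supremum functional is finite, and these suprema are exactly $B_{1}$ (case $q=p^{*}>p$) and $B_{2}$ (case $q=p$). The delicate point, which I expect to be the main obstacle, is not mere finiteness but the \emph{sharp} constant: after the change of variable $\xi(s)=\int_{s}^{\infty}h^{-p/(p-1)}\di\tau$, which normalizes the right-hand side to $\int\abs{g'}^{p}\di\xi$, the extremal problem becomes a Bliss type inequality whose best constant is computed through the Beta function and equals the factor $k_{q,p}$ of \eqref{eq:kqp}. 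Combining this sharp one-dimensional constant with the Maz'ya factor gives $C_{1}=B_{1}k_{p^{*},p}$ and, after raising to the $p$-th power, $C_{2}=(B_{2}k_{p,p})^{p}=B_{2}^{p}p^{p}/(p-1)^{p-1}$. Finally, specializing to $h(s)=C_{N}s^{(N-1)/N}$ with $C_{N}=N\omega_{N}^{1/N}$ makes $w\equiv1$ and allows one to evaluate $B_{1}$ explicitly, so that the product $B_{1}k_{p^{*},p}$ collapses to $S(N,p)$, recovering the Euclidean best constant \eqref{eq:sobec_bc}.
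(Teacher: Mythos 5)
Your proposal is correct and follows essentially the same route as the paper: Hardy--Littlewood rearrangement of the weighted left-hand side, a P\'olya--Szeg\"o reduction of the gradient term via the coarea formula, H\"older on level sets and the isoperimetric inequality, and then a sharp one-dimensional weighted Hardy inequality of Bliss type (the paper cites the Li--Mao/Chen form, which supplies exactly the factor $k_{q,p}$ you describe) applied with $f=-u^{*}_{s}h^{p/(p-1)}$. The only difference is presentational: the paper isolates the P\'olya--Szeg\"o step and the one-dimensional inequality as separate preliminary statements, while you fold them into the main argument.
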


\begin{corollary}[Hardy-Sobolev inequality]
  \label{co:hsobol}
  Assume that $M$ satisfies the $h$-isoperimetric inequality and
  \eqref{eq:hyper}--\eqref{eq:B2} hold. Let
  \begin{equation*}
    q<p
    \,,
    \quad
    p^{*}(q):=\frac{N-q}{N-p}p
    \,.
  \end{equation*}
  Then
  \begin{multline}
    \label{eq:hsobol_n}
    \int_{M}
    \frac{\abs{ u}^{p^{*}(q)}}{d(x)^{q}}
    w(V(d(x)))^{-(p^{*}(q)-q)}
    \di\mu
    \\
    \le
    C_{1}^{N(p-q)/(N-p)}
    C_{2}^{q/p}
    \left(
      \int_{M}
      \abs{\grad u}^{p}
      \di\mu
    \right)^{(N-q)/(N-p)}
    \,.
\end{multline}
\end{corollary}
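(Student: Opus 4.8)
The plan is to obtain \eqref{eq:hsobol_n} purely by interpolating between the two inequalities of Theorem~\ref{t:sobol} via Hölder's inequality, with no further geometric input; note that the hypotheses assumed in the corollary make both parts i) and ii) of that theorem available. First I would factor the integrand on the left-hand side of \eqref{eq:hsobol_n} as a product of the Sobolev integrand $S:=\abs{u}^{p^{*}}w(V(d(x)))^{-p^{*}}$ from \eqref{eq:sobol_n} and the Hardy integrand $T:=\abs{u}^{p}/d(x)^{p}$ from \eqref{eq:sobol_nn}, each raised to a suitable power. Writing the target integrand as $S^{\alpha}T^{\beta}$ and matching exponents factor by factor produces three conditions: $p^{*}\alpha+p\beta=p^{*}(q)$ for the power of $\abs{u}$, $p\beta=q$ for the power of $d(x)$, and $p^{*}\alpha=p^{*}(q)-q$ for the power of $w(V(d(x)))$.

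The key observation is that these three conditions are simultaneously consistent, forcing $\beta=q/p$ and $\alpha=(p-q)/p$, so that $\alpha+\beta=1$; this is exactly the compatibility required to apply Hölder's inequality with conjugate exponents $1/\alpha=p/(p-q)$ and $1/\beta=p/q$. Here one invokes the definition $p^{*}(q)=p(N-q)/(N-p)$ to check, after a short computation, that $p^{*}(q)-q=N(p-q)/(N-p)$, whence $p^{*}\alpha=p^{*}(q)-q$ is consistent with the other two relations. This compatibility is the crux of the corollary: it is not automatic, and reflects the precise scaling built into the exponent $p^{*}(q)$.

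With $\alpha$ and $\beta$ so determined, I would apply Hölder's inequality to obtain
\begin{equation*}
  \int_{M}
  \frac{\abs{u}^{p^{*}(q)}}{d(x)^{q}}
  w(V(d(x)))^{-(p^{*}(q)-q)}
  \di\mu
  \le
  \Big(\int_{M} S\di\mu\Big)^{\alpha}
  \Big(\int_{M} T\di\mu\Big)^{\beta}
  \,.
\end{equation*}
Then I would insert the bounds from Theorem~\ref{t:sobol}, namely $\int_{M} S\di\mu\le C_{1}^{p^{*}}\big(\int_{M}\abs{\grad u}^{p}\di\mu\big)^{p^{*}/p}$ (from raising \eqref{eq:sobol_n} to the power $p^{*}$) and $\int_{M} T\di\mu\le C_{2}\int_{M}\abs{\grad u}^{p}\di\mu$ (from \eqref{eq:sobol_nn}). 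Collecting constants yields $C_{1}^{p^{*}\alpha}C_{2}^{\beta}=C_{1}^{N(p-q)/(N-p)}C_{2}^{q/p}$, while the exponent of $\int_{M}\abs{\grad u}^{p}\di\mu$ becomes $p^{*}\alpha/p+\beta=p^{*}(q)/p=(N-q)/(N-p)$, which is precisely the right-hand side of \eqref{eq:hsobol_n}.

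Since every step is an application of Hölder together with the already-established Theorem~\ref{t:sobol}, there is no genuine analytic obstacle; the only thing to watch is the bookkeeping of exponents, and in particular the verification that the three matching conditions are mutually consistent. I expect that verification—confirming $\alpha+\beta=1$ and that the constants and the gradient exponent reassemble correctly—to be the most delicate, though entirely elementary, part of the argument.
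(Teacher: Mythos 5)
Your proposal is correct and follows essentially the same route as the paper: the paper likewise splits the exponent of $\abs{u}$ as $q+(p^{*}(q)-q)$, applies H\"older with the conjugate pair $p/q$ and $p/(p-q)$ to factor the left-hand side into the Hardy integral to the power $q/p$ times the Sobolev integral to the power $(p-q)/p$, and then inserts \eqref{eq:sobol_n} and \eqref{eq:sobol_nn}. Your exponent bookkeeping, including the identity $p^{*}(q)-q=N(p-q)/(N-p)$ and the final exponent $(N-q)/(N-p)$, matches the paper's computation exactly.
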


\begin{remark}
  \label{r:sobol_eucl}
  The constant in \eqref{eq:sobol_n} is sharp, at least in $\RN$ with the Euclidean metric. In this case indeed $h(t)=C_{N}
  t^{(N-1)/N}$, where $C_{N}=N\omega_{N}^{1/N}$ is the constant selected in the definition of $w$ so that $w(s)=1$ for all $s$. Then
  \begin{equation*}
    C_{1}
    =
    C_{N}^{-1}
    \left[
      \frac{N(p-1)}{N-p}
    \right]^{(p-1)/p}
    k_{p^{*},p}
    =
    S(N,p)
    \,,
  \end{equation*}
  the best constant in the Sobolev inequality \eqref{eq:sobec}. A similar result holds for $C_{2}$, see Subsection~\ref{s:examp_pw}.
\end{remark}

The function $w$ is an important geometrical characteristic of the
manifold and has been employed in the authors' papers
\cite{Andreucci:Tedeev:2000, Andreucci:Tedeev:2015} when studying embedding theorems of
Gagliardo-Nirenberg type and in the qualitative analysis of solutions
to nonlinear parabolic equations in both Euclidean and Riemannian
setting. In the form of \eqref{eq:sobol_n}
the Sobolev inequality firstly was proven in \cite{Minerbe:2009}. A short proof
of \eqref{eq:sobol_n} under assumptions stronger than \eqref{eq:B1}
was given in \cite{Andreucci:Tedeev:2021}.
\\
See for example \cite{Petean:Ruiz:2011} for other reasons of interest of isoperimetric profiles.

The plan of the paper is the following: below we give some examples of manifolds where our results can be applied. In Section~\ref{s:hardy} we recall a known Hardy-like inequality, extracting from it the consequences that we need. In Section~\ref{s:p_sobol} we give the proofs of our results. 

\subsection{Examples}
\label{s:examp}

\subsubsection{The case of product-like isoperimetric profiles.}
\label{s:examp_pro}

We discuss here the case when one can assume
\begin{equation}
  \label{eq:examp_pro_h}
  h(s)
  =
  \min(
  a
  s^{\lambda}
  ,
  b
  s^{\mu}
  )
  \,,
  \qquad
  s>0
  \,,
\end{equation}
where $\lambda>\mu>0$ and $a$, $b>0$. This is for example the case of subsets of $\RN$ shaped like paraboloids
\begin{equation*}
  \Omega_{\beta}
  =
  \{
  (x',x_{N})\in\RN
  \mid
  \abs{x'}<x_{N}^{\beta}
  \}
  \,,
\end{equation*}
where $\beta\in(0,1)$; in this case one has $\lambda=(N-1)/N$ and $\mu=\beta(N-1)/(1+\beta(N-1))$. However, we pursue here a different class of examples, that is the one arising from product manifolds in which a factor is an Euclidean space and the other one is compact. The problem of determining the isoperimetric profile even in this specific class of Riemannian manifolds is difficult (see \cite{Ruiz:Vazquez:2020} and references therein). In our examples, if $N$ is the topological dimension of the product manifold and $k<N$ its dimension at infinity, one has  \eqref{eq:examp_pro_h} with $\lambda=(N-1)/N$ and $\mu=(k-1)/k$; a lengthy but elementary explicit computation yields in this case that the $\sup$ defining $B_{1}$ in \eqref{eq:B1} is attained as $s\to+\infty$ and
\begin{equation}
  \label{eq:examp_pro_B1}
  B_{1}
  =
  C_{N}^{-1}
  \left[
    \frac{N-p}{N}
  \right]^{(N-p)/(Np)}
  \,
  \left[
    \frac{k}{k-p}
  \right]^{(N-1)/N}
  \,
  (p-1)^{(p-1)/p}
  \,.
\end{equation}
However, note that in the embedding \eqref{eq:sobol_n} the constants $a$ and $b$ enter the estimate through $w$; see \eqref{eq:examp_minerbe} below.
\\
In \cite{Pedrosa:Ritore:1999} it is considered the case of the product $S^{1}_{r}\times\R^{k}$, where $S^{1}_{r}$ is the circle of radius $r>0$; here $N=k+1$ and $2\le k\le 7$; the isoperimetric profile, that is the best possible $h$ in \eqref{eq:isoper_a}, is determined exactly in the form \eqref{eq:examp_pro_h}. In \cite{Ruiz:Vazquez:2020} the authors investigate quantitatively the case of $T^{m}\times\R^{k}$, $2\le k\le 7-m$, where $T^{m}$ is the $m$-dimensional flat torus; here of course $N=m+k$; in the case $m=2$ they obtain the exact profile \eqref{eq:examp_pro_h} for, respectively, small enough and large enough $s$; however the profile is quantitatively estimated for all $s$.
\\
In \cite{Ritore:Vernadakis:2017} it is considered the more general product manifold given by $M_{m}\times \R^{k}$, where $M_{m}$ is an $m$-dimensional compact Riemannian manifold without boundary and $k\ge 1$; let here $N=m+k$. It is proved that
the isoperimetric profile $h$ is given as in \eqref{eq:examp_pro_h}, but only the  constant $b$ is determined exactly, as
\begin{equation}
  \label{eq:examp_pro_b}
  b
  =
  k(\omega_{k}H^{m}(M_{m}))^{1/k}
  \,,
\end{equation}
where $H^{m}$ denotes the $m$-dimensional Hausdorff measure.
\\
In all the cases just discussed, we may select $C_{N}=a$ in the definition of $w$, so that the constant $B_{1}$ is given by \eqref{eq:examp_pro_B1} and the (Minerbe type)
inequality \eqref{eq:sobol_n} reads
\begin{multline}
  \label{eq:examp_minerbe}
  \left(
    \int_{M_{m}\times\R^{k}}
    \abs{ u}^{p^{*}}
    \min\left\{  1,\frac{b}{a}V(d(x))^{-\frac{m}{k(m+k)}}\right\}^{p^{*}}
    \di\mu
  \right)^{1/p^{*}}
  \\
  \le
  C_{1}\left(
    \int_{M_{m}\times\R^{k}}
    \abs{ \grad u}^{p}
    \di\mu
  \right)^{1/p}
  \,,
  \quad
  p<k
  \,.
\end{multline}
The last assumption $p<k$ is needed to guarantee the $p$-hyperbolicity \eqref{eq:hyper}.

\subsubsection{Manifolds with bounded geometry.}
\label{s:examp_bdd}
Passing to a more general setting, following \cite[p.~136-137]{ChavelII}, let $M_{N}$ be an $N$-dimensional complete
Riemannian manifold with bounded geometry, that is such that its Ricci curvature is bounded below by a negative constant, and its injectivity radius is bounded below by a positive constant. For any $N\ge \nu>1$, $\rho>0$ define
the isoperimetric function
\begin{equation*}
  J_{\nu,\rho}(M_{N})
  :=
  \inf_{\Omega}
  \frac{\abs{\partial\Omega}_{N-1}}{V(\Omega)^{1-1/\nu}}
  \,,
\end{equation*}
where $\Omega$ varies over the open submanifolds of $M_{N}$ with
compact closure, $C^{\infty}$ boundary and containing a closed metric disk of radius
$\rho$. Then $J_{\nu,\rho}(M_{N})>0$ if and
only if there exists $v_{0}>0$ and $\theta>0$ such that
\begin{equation}
  \label{eq:examp_bdd_i}
  \abs{\partial\Omega}_{N-1}
  \ge
  \theta
  \begin{cases}
    V(\Omega)^{1-1/N}
    \,,
    &\qquad
    V(\Omega)\le v_{0}
    \,,
    \\
    V(\Omega)^{1-1/\nu}
    \,,
    &\qquad
    V(\Omega)\ge v_{0}
    \,.
  \end{cases}
\end{equation}
Select next $C_{N}=\theta$ in the definition of $w$. Then in \eqref{eq:examp_a} we can
choose $\alpha=(\nu-1)/\nu$ and we have from \eqref{eq:examp_c1}
\begin{equation*}
  C_{1}
  \le
  \theta^{-1}
  \left[
    \frac{N-p}{N}
  \right]^{1/p^{*}}
    \frac{(p-1)^{(p-1)/p}}{(\alpha p-p+1)^{(N-1)/N}}
  k_{p^{*},p}
  \,,
\end{equation*}
where $p^{\ast}=Np/(N-p)$.

\subsubsection{The case of power-like $h$.}
\label{s:examp_pw}
Let us consider a case where the finiteness of $B_{1}$ in \eqref{eq:B1} can be proved easily, but still with a majorization which is sharp in the Euclidean case. That is we assume that
\begin{equation}
  \label{eq:examp_a}
  \frac{t^{\alpha}}{h(t)}
  \quad
  \text{is nonincreasing for $t>0$,}
\end{equation}
where $(p-1)/p<\alpha\le(N-1)/N$; for example \eqref{eq:examp_a} holds for $h(s)=\gamma s^{k}[\ln(e+s)]^{z}$ for suitable $k$, $z$ for large $s$. Then we may estimate $B_{1}$ and
$B_{2}$ from above as follows. 

According to definition \eqref{eq:B1}, with an obvious definition of $J_{1}(s)$, $J_{2}(s)$,
\begin{equation*}
  B_{1}
  =
  \sup_{s>0}
  J_{1}(s)^{1/p^{\ast}}
  J_{2}(s)^{(p-1)/p}
  \,.
\end{equation*}
Let us estimate $J_{1}$ and $J_{2}$; by means of the change of
variable $\tau=sy$, using also \eqref{eq:examp_a}, we have
\begin{multline*}
  J_{1}
  =
  C_{N}^{-p^{*}}
  s
  \int_{0}^{1}
  \left(
    \frac{(  sy)^{\alpha} h(sy)}{(  sy)^{\alpha}(  sy)^{(N-1)/N}}
  \right)^{p^{*}}
  \di y
  \le
  \\
  C_{N}^{-p^{*}}
  s
  \left(
    \frac{h(s)}{s^{(N-1)/N}}
  \right)^{p^{*}}
  \int_{0}^{1}
  y^{-\left(  \frac{N-1}{N}-\alpha\right)p^{*}}
  \di y
  =
  \frac{C_{N}^{-p^{*}}
    \left(  N-p\right)  }{N(\alpha p-p+1)}
  s
  \left(
    \frac{h(s)}{s^{(N-1)/N}}
  \right)^{p^{*}}
  \,.
\end{multline*}
Analogously,
\begin{multline*}
  J_{2}=s
  \int_{1}^{\infty}
  \frac{(  sy)^{\alpha p/(p-1)}}{(  sy)^{\alpha p/(p-1)}h(sy)^{p/(p-1)}}
  \di y
  \le
  s
  \frac{1}{h(s)^{p/(p-1)}}
  \int_{1}^{\infty}
  \frac{\di y}{y^{\alpha p/(p-1)}}
  =
  \\
  \frac{p-1}{(\alpha p-p+1)}
  \frac{s}{h(s)^{p/(p-1)}}
  \,.
\end{multline*}
Finally, we have
\begin{equation}
  \label{eq:examp_c1}
  B_{1}
  \le
  C_{N}^{-1}
  \left[
    \frac{N-p}{N}
  \right]^{1/p^{*}}
  \frac{(p-1)^{(p-1)/p}}{(\alpha p-p+1)^{(N-1)/N}}
  \,,
\end{equation}
provided \eqref{eq:examp_a} holds. In particular, if $\alpha=(N-1)/N$, then
\begin{equation*}
  B_{1}
  \le
  C_{N}^{-1}
  \left[
    \frac{N(p-1)}{N-p}
  \right]^{(p-1)/p}
  \,,
\end{equation*}
leading to an estimate for the constant $C_{1}$ in \eqref{eq:sobol_n} which is in fact is the well known best constant in Sobolev inequality.

Let us calculate next $B_{2}$ in the Euclidean case where $h(\tau)=C_{N}\tau^{(N-1)/N}$, and $V(\tau)=\omega_{N}\tau^{N}$; an elementary calculation of the two integrals in the definition \eqref{eq:B2} of $B_{2}$ gives in this case $B_{2}=(p-1)^{(p-1)/p}/(N-p)$.
Therefore, in \eqref{eq:sobol_nn} we get $C_{2}=[p/(N-p)]^{p}$ which is the well known sharp constant in
the Euclidean Hardy inequality.

Let us continue with a more general case. Let us assume, besides \eqref{eq:examp_a}, that
\begin{equation}
  \label{eq:examp_b}
  h(s)
  \ge
  c_{0}
  \frac{s}{V^{(-1)}(s)}
  \,,
  \quad
  \text{for all $s>A$,}
\end{equation}
for some given constants $A$, $c_{0}>0$.
Since the metric is locally (i.e., for small $\tau$) Euclidean, we have that for a suitable constant $c_{1}$
\begin{equation}
  \label{eq:examp_i}
  \frac{1}{V^{(-1)}(\tau)}
  \le
  c_{1}
  \frac{h(\tau)}{\tau}
  \,,
  \qquad
  0<\tau\le A
  \,.
\end{equation}
For $\tau>A$ from \eqref{eq:examp_b} we have
\begin{equation}
  \label{eq:examp_ii}
  \frac{1}{  V^{(-1)}(\tau)^{p}}
  \le
  c_{0}^{-p}
  \frac{h(\tau)^{p}}{\tau^{p}}
  \,.
\end{equation}
Therefore integrating and exploiting as above \eqref{eq:examp_a}, we have for all $s>0$
\begin{equation*}
  \int_{0}^{s}
  \frac{1}{  V^{(-1)}(\tau)^{p}}
  \di\tau
  \le
  \max(c_{1}^{p},c_{0}^{-p})
  \int_{0}^{s}
  \frac{h(\tau)^{p}}{\tau^{p}}
  \di\tau
  \le
  \frac{\max(c_{1}^{p},c_{0}^{-p})}{\alpha p-p+1}
  \frac{h(s)^{p}}{s^{p-1}}
  \,,
\end{equation*}
where we have applied in the first inequality \eqref{eq:examp_i} and \eqref{eq:examp_ii}.
Then, on using again the estimate for the second factor below which we have proved above, we find
\begin{equation*}
  \left(
    \int_{0}^{s}
    \frac{1}{V^{(-1)}(\tau)^{p}}
    \di\tau
  \right)^{1/p}
  \left(
    \int_{s}^{\infty}
    \frac{1}{h(\tau)^{\frac{p}{p-1}}}
    \di\tau
  \right)^{p-1}
  \le
  \max(c_{1},c_{0}^{-1})
  \frac{(p-1)^{(p-1)/p}}{\alpha p-p+1}
  \,.
\end{equation*}
Thus the constant in \eqref{eq:sobol_nn} is estimated under the present assumptions by
\begin{equation}
  \label{eq:examp_j}
  C_{2}
  \le
  \left[
    \frac{p\max(c_{1},c_{0}^{-1})}{\alpha p-p+1}
  \right]^{p}
  \,.
\end{equation}

\section{The one-dimensional Hardy type inequality.}
\label{s:hardy}
The next statement, which
is a generalized Bliss \cite{Bliss:1930} inequality, is an important tool in our proof of
Theorem~\ref{t:sobol} and was proven in \cite{Li:Mao:2020} (see also \cite{Chen:2015})

\begin{theorem}[\cite{Li:Mao:2020}]
  \label{t:prel_lm}
  Let $1< p\le q <\infty$, $\mu$ and $\nu$
  be two $\sigma$-finite Borel measures on $\R$. Set
  \begin{equation*}
    B
    =
    \sup_{x\in \R}
    \nu((-\infty,x])^{(p-1)/p}
    \mu([x,+\infty))^{1/q}
    \,.
  \end{equation*}
  If $B<+\infty$ for all $f:\R\to \R$ we have
  \begin{equation}
    \label{eq:prel_lm_n}
    \left[
      \int_{\R}
      \Abs{
        \int_{-\infty}^{x}
        f(y)
        \di\nu_{y}
      }^{q}
      \di\mu_{x}
    \right]^{1/q}
    \le
    A
    \left[
      \int_{\R}
      \abs{f(x)}^{p}
      \di\nu_{x}
    \right]^{1/p}
    \,,
  \end{equation}
   for an optimal constant $A$ such that
\begin{equation}
  \label{eq:prel_lm_nn}
  B\le A\le k_{q,p}B
  \,,
\end{equation}
with $k_{q,p}$ defined in \eqref{eq:kqp}.
\end{theorem}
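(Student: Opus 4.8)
The plan is to prove the two inequalities in \eqref{eq:prel_lm_nn} separately, after reducing to a canonical setting. First I would observe that it suffices to treat $f\ge0$: since $\Abs{\int_{-\infty}^x f\di\nu_y}\le\int_{-\infty}^x\abs{f}\di\nu_y$, replacing $f$ by $\abs{f}$ cannot decrease the left-hand side of \eqref{eq:prel_lm_n} while leaving the right-hand side unchanged. For $f\ge0$ the primitive $F(x):=\int_{-\infty}^x f\di\nu_y$ is nondecreasing, a fact I would use repeatedly. Writing $N(x):=\nu((-\infty,x])$ and $M(x):=\mu([x,+\infty))$, I would next normalize $\nu$ to Lebesgue measure via the monotone change of variable $s=N(x)$ (using the generalized inverse of $N$ to accommodate atoms and flat pieces of $\nu$); this turns the operator into the ordinary Hardy operator $F(s)=\int_{0}^{s}\tilde f\di t$ on $(0,\ell)$, $\ell=\nu(\R)\le\infty$, with $N(s)=s$ and a transformed measure on $(0,\ell)$ that I still denote $\mu$, so that $B=\sup_{s}s^{(p-1)/p}M(s)^{1/q}$.

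The lower bound $B\le A$ is the easy half and follows from test functions: taking $f=\chi_{(-\infty,x_{0}]}$ gives $\int\abs{f}^{p}\di\nu=N(x_{0})$, while $F(y)=N(x_{0})$ for $y\ge x_{0}$, hence $\int\abs{F}^{q}\di\mu\ge N(x_{0})^{q}M(x_{0})$. The resulting ratio equals exactly $N(x_{0})^{(p-1)/p}M(x_{0})^{1/q}$, and taking the supremum over $x_{0}$ yields $A\ge B$.

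For the upper bound $A\le k_{q,p}B$ I would argue in two steps. The first is a monotonicity reduction to a power-law weight: admissibility forces $M(s)\le B^{q}s^{-q(p-1)/p}=:M^{*}(s)$ for every $s$, so let $\mu^{*}$ be the measure with tail $M^{*}$. Since $F$ is nondecreasing its super-level sets are right rays, so the layer-cake formula $\int F^{q}\di\mu=\int_{0}^{\infty}\mu(\{F^{q}>\lambda\})\di\lambda$ together with $\mu((a,\infty))\le\mu^{*}((a,\infty))$ gives $\int F^{q}\di\mu\le\int F^{q}\di\mu^{*}$. Thus the operator norm for $\mu$ is dominated by that for the extremal power weight $\mu^{*}$, and it suffices to treat $\mu=\mu^{*}$. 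The second step is the genuine Bliss computation: for the pure power weight the ratio functional $\int_{0}^{\infty}F^{q}\di\mu^{*}\big/\bigl(\int_{0}^{\infty}f^{p}\di t\bigr)^{q/p}$ is scale invariant, its extremal solves an Euler--Lagrange equation, and the explicit maximizer (of the form $f(t)\propto t^{c}(1+t^{r})^{-d}$ with $r=q/p-1$) reduces both integrals to Beta functions; evaluating them and using $\textup{Beta}(a,b)=\Gamma(a)\Gamma(b)/\Gamma(a+b)$ produces precisely the constant $k_{q,p}$ of \eqref{eq:kqp}. For $p=q$ the extremal degenerates and one instead recovers $k_{p,p}=p/(p-1)^{(p-1)/p}$ from the classical Hardy computation.

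The main obstacle is this final step: identifying and verifying the Bliss extremal and carrying out the Beta-function evaluation so that the constant comes out \emph{exactly} $k_{q,p}$, rather than merely some admissible multiple of $B$. By contrast, the change of variables in the normalization (which needs only mild measure-theoretic care when $\nu$ has atoms) and the monotonicity reduction are routine, and the lower bound is immediate.
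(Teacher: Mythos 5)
The paper does not prove this statement at all: Theorem~\ref{t:prel_lm} is imported verbatim from \cite{Li:Mao:2020} (see also \cite{Chen:2015}), so there is no in-paper argument to compare against; your proposal has to be judged against the literature. On that basis, your skeleton is the standard and correct one, and most of it checks out. The reduction to $f\ge 0$, the normalization of $\nu$ to Lebesgue measure via the generalized inverse of $N(x)=\nu((-\infty,x])$ (one does need to verify that the pushforward problem has constant $\widetilde B\le B$, including at points in the gaps created by atoms of $\nu$, but this is routine and works out), and the lower bound $A\ge B$ via $f=\chi_{(-\infty,x_0]}$ are all sound. The domination step is also correct and is the right idea: $M(s)\le B^{q}s^{-q(p-1)/p}=M^{*}(s)$ by definition of $B$, the superlevel sets of the nondecreasing primitive $F$ are right rays, and the layer-cake formula gives $\int F^{q}\di\mu\le\int F^{q}\di\mu^{*}$ while the $B$-constant of the extremal measure $\mu^{*}$ is still exactly $B$; this cleanly reduces everything to the pure power weight.

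The genuine gap is the step you yourself flag as the main obstacle, and it is not merely computational. For the power weight you must prove the \emph{upper} bound $\sup_{f}\|F\|_{L^{q}(\mu^{*})}/\|f\|_{L^{p}}\le k_{q,p}B$, but exhibiting a critical point of the scale-invariant ratio functional (the Euler--Lagrange extremal $f(t)\propto t^{c}(1+t^{r})^{-d}$) and evaluating the Beta integrals only shows that the supremum is \emph{at least} $k_{q,p}B$ --- i.e.\ it sharpens the lower bound for that particular weight, not the upper bound you need. Because the functional is scale invariant, compactness fails and the attainment of the supremum (together with uniqueness of critical points modulo dilations) is precisely the content of Bliss's original variational argument \cite{Bliss:1930}; alternatively one needs a direct inequality proof (as in \cite{Chen:2015,Li:Mao:2020}, which obtain the two-sided bound by explicit test functions in a variational formula for $A$ rather than by Euler--Lagrange). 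Until that attainment step is supplied, your argument establishes $B\le A$ in general and $A\ge k_{q,p}B$ for the power weight, but not the asserted inequality $A\le k_{q,p}B$. The degenerate case $q=p$ is fine, since there the final step is the classical sharp Hardy inequality, for which a direct proof is available.
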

Note that \eqref{eq:prel_lm_n} can be seen as a generalized Hardy type inequality (see \cite{Mazja:ss,Opic:Kufner:hardy}).
It was shown in \cite{Chen:2015}, see also \cite{Li:Mao:2020}, that the estimate of $A$ in \eqref{eq:prel_lm_nn} is sharp.

We draw from Theorem~\ref{t:prel_lm} the following consequences, by means of standard changes of variables.

\begin{corollary}
  \label{co:prel_lmm}
  Let $1< p\le q <\infty$, $\mu$ and $\nu$
  be two $\sigma$-finite Borel measures on $[0,+\infty)$. Set
  \begin{equation}
    \label{eq:prel_lmm_nn}
    \widetilde B
    =
    \sup_{x\ge 0}
    \nu([x,\infty))^{(p-1)/p}
    \mu([0,x]))^{1/q}
    \,.
  \end{equation}
  If $\widetilde B<+\infty$ for all $f:[0,+\infty)\to \R$ we have
  \begin{equation}
    \label{eq:prel_lmm_n}
    \left[
      \int_{0}^{\infty}
      \Abs{
        \int_{x}^{\infty}
        f(y)
        \di\nu_{y}
      }^{q}
      \di\mu_{x}
    \right]^{1/q}
    \le
    A
    \left[
      \int_{0}^{\infty}
      \abs{ f(x)}^{p}
      \di\nu_{x}
    \right]^{1/p}
    \,,
  \end{equation}
  for an optimal constant $A$ such that $\widetilde B \le A\le k_{q,p}\widetilde B$.
  \\
  In particular, choosing
  \begin{equation*}
    q
    =
    p^{*}
    \,,
    \qquad
    \di\nu_{y}
    =
    \frac{dy}{h(y)^{p/(p-1)}}
    \qquad
    \di\mu_{x}
    =
    \left(
      \frac{h(x)}{C_{N}x^{(N-1)/N}}
    \right)^{p^{*}}
    \di x
    \,,
  \end{equation*}
  we obtain from \eqref{eq:prel_lmm_n} that
  \begin{multline}
    \label{eq:prel_lmm_m}
    \left[
      \int_{0}^{\infty}
      \Abs{
        \int_{x}^{\infty}
        f(y)
        \frac{\di y}{h(y)^{p/(p-1)}}
      }^{p^{*}}
      \left(
        \frac{h(x)}{C_{N}x^{(N-1)/N}}
      \right)^{p^{*}}
      \di x
    \right]^{1/p^{*}}
    \\
    \le
    A
    \left[
      \int_{0}^{\infty}
      \abs{ f(x)}^{p}
      \frac{\di x}{h(x)^{p/(p-1)}}
    \right]^{1/p}
    \,,
  \end{multline}
  provided \eqref{eq:B1} holds true and we select $\widetilde B=B_{1}$. Next, choosing
  \begin{equation*}
    q=p
    \,,
    \quad
    \di\nu_{y}
    =
    \frac{\di y}{h(y)^{p/(p-1)}}
    \,,
    \quad
    \di\mu_{x}
    =
    \frac{\di x}{(V^{(-1)}(x))^{p}}
    \,,
  \end{equation*}
  we have
  \begin{multline}
    \label{eq:prel_lmm_mm}
      \int_{0}^{\infty}
      \Abs{
        \int_{x}^{\infty}
        f(y)
        \frac{\di y}{h(y)^{p/(p-1)}}
      }^{p}
      \left(
        \frac{1}{V^{(-1)}(x)}
      \right)^{p}
      \di x
    \\
    \le
    A^{p}
      \int_{0}^{\infty}
      \abs{ f(x)}^{p}
      \frac{\di x}{h(x)^{p/(p-1)}}
    \,,
  \end{multline}
  provided \eqref{eq:B2} holds true and we select $\widetilde B=B_{2}$.
\end{corollary}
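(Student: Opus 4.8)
The plan is to reduce Corollary~\ref{co:prel_lmm} to Theorem~\ref{t:prel_lm} by the reflection $\sigma(x)=-x$, which turns the forward integral $\int_{x}^{\infty}$ into the backward integral of the type appearing in \eqref{eq:prel_lm_n} and swaps the roles of the two tails in the defining supremum. Concretely, I would push the measures forward by setting $\bar\mu=\sigma_{*}\mu$ and $\bar\nu=\sigma_{*}\nu$ (so that $\bar\mu(E)=\mu(-E)$ and $\bar\nu(E)=\nu(-E)$); these are again $\sigma$-finite Borel measures on $\R$, now supported in $(-\infty,0]$ since $\mu,\nu$ are carried by $[0,\infty)$. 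I would reflect the test function as well, putting $\bar f(\xi)=f(-\xi)$ for $\xi\le 0$ and $\bar f\equiv0$ on $(0,\infty)$.

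Next I would verify that the three integrals transform correctly under $\sigma$. Using the change of variable $y=-t$ together with the pushforward identity $\int g\di\bar\nu=\int g\circ\sigma\di\nu$, one gets, for $\xi=-x\le 0$,
\[
\int_{x}^{\infty} f(y)\di\nu_{y}
=
\int_{-\infty}^{\xi}\bar f(t)\di\bar\nu_{t}
\,,
\]
and reflecting the outer variable in the same way yields
\[
\int_{0}^{\infty}
\Abs{\int_{x}^{\infty} f\di\nu}^{q}\di\mu_{x}
=
\int_{\R}
\Abs{\int_{-\infty}^{\xi}\bar f\di\bar\nu}^{q}\di\bar\mu_{\xi}
\,,
\]
the last integral being effectively extended only over $(-\infty,0]$. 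Likewise $\int_{0}^{\infty}\abs{f}^{p}\di\nu=\int_{\R}\abs{\bar f}^{p}\di\bar\nu$. Consequently \eqref{eq:prel_lm_n} applied to $\bar f,\bar\mu,\bar\nu$ is exactly \eqref{eq:prel_lmm_n}, and since $\sigma$ sets up a bijective isometry $f\leftrightarrow\bar f$ preserving both sides, the optimal constant $A$ is literally unchanged.

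The one step requiring care is matching the two suprema. For $\xi\le 0$, writing $x=-\xi\ge0$, the support of the measures gives $\bar\nu((-\infty,\xi])=\nu([x,\infty))$ and $\bar\mu([\xi,\infty))=\mu((-\infty,x])=\mu([0,x])$, so the reflected product equals $\nu([x,\infty))^{(p-1)/p}\mu([0,x])^{1/q}$; for $\xi>0$ the factor $\bar\mu([\xi,\infty))=\mu((-\infty,-\xi])$ vanishes because $-\xi<0$ and $\mu$ lives on $[0,\infty)$, so such values contribute nothing to the supremum. Hence the constant $B$ of Theorem~\ref{t:prel_lm} for the reflected data coincides with $\widetilde B$ in \eqref{eq:prel_lmm_nn}, and the two-sided bound $\widetilde B\le A\le k_{q,p}\widetilde B$ follows at once from \eqref{eq:prel_lm_nn}.

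Finally, the two displayed specializations are immediate substitutions into \eqref{eq:prel_lmm_n}. Taking $q=p^{*}$ and the stated $\di\nu_{y}=\di y/h(y)^{p/(p-1)}$, $\di\mu_{x}=(h(x)/(C_{N}x^{(N-1)/N}))^{p^{*}}\di x$, one reads off $\mu([0,x])^{1/p^{*}}$ and $\nu([x,\infty))^{(p-1)/p}$ as precisely the two factors in \eqref{eq:B1}, so that $\widetilde B=B_{1}$ and \eqref{eq:prel_lmm_n} becomes \eqref{eq:prel_lmm_m}. Taking instead $q=p$ and $\di\mu_{x}=\di x/(V^{(-1)}(x))^{p}$ identifies $\widetilde B=B_{2}$ via \eqref{eq:B2}, and raising \eqref{eq:prel_lmm_n} to the power $p$ gives \eqref{eq:prel_lmm_mm}. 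The only genuine content is the reflection bookkeeping of the third paragraph; everything else is the change of variables already advertised in the statement.
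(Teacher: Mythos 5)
Your reflection argument is correct and is precisely the ``standard change of variables'' that the paper invokes without detail: the pushforward under $\sigma(x)=-x$ converts the forward-tail integrals and the supremum $\widetilde B$ into the exact form of Theorem~\ref{t:prel_lm}, and the two specializations then read off $B_{1}$ and $B_{2}$ as you say. This matches the paper's (implicit) proof, with the bookkeeping carried out explicitly.
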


\section{Proof of Theorem~\ref{t:sobol} and of Corollary~\ref{co:hsobol}.}
\label{s:p_sobol}

\subsection{Preliminaries}\label{s:prel}
In what follows we will use some notions from geometrical measure theory.
Let $u$ be a measurable function defined on $M$. Denote
\begin{equation*}
  \nu(t):=
  \Abs{
    \{  x\in M \mid
    \abs{ u} >t\}
  }
  \,.
\end{equation*}
and let $u^{*}(s)$ be the decreasing rearrangement of $u$ defined on
$[ 0,\infty]$ as, roughly speaking, the generalized inverse to its distributional function, and more exactly as
\begin{equation*}
  u^{*}(0)=\sup\abs{ u}
  \,,
  \qquad
  u^{*}(s)=\inf\{ t\mid\nu(t)<s\}
  \,.
\end{equation*}
Let us recall some basic facts that we are going to use.

i) Cavalieri's principle, which is consequence of equimeasurability of sets $\{
\abs{u} >t\}$ and $\{u^{*}(s)>t\}$:
\begin{equation}
  \label{eq:prel_cav}
  \int_{M}
  \abs{u}^{p}
  \di\mu
  =
  \int_{0}^{\infty}
  u^{\ast}(s)^{p}
  \di s
  \,.
\end{equation}

ii) Hardy-Littlewood inequality
\begin{equation}
  \label{eq:prel_hl}
  \int_{M}
  uv
  \di\mu
  \le
  \int_{0}^{\infty}
  u^{*}(s)v^{*}(s)
  \di s
  \,.
\end{equation}

iii) Federer co-area formula (we refer the reader to \cite{Mazja:ss} in the Euclidean setting
and \cite{AubinNPRG} for manifolds). For any smooth enough functions $v$ and
$u$ defined on $M$ we have
\begin{equation}
  \label{eq:prel_fed}
  \int_{M}
  v\abs{\grad u}
  \di\mu
  =
  \int_{0}^{\infty}
  \di\tau
  \int_{\abs{ u} =\tau}
  v(x)
  \di s_{N-1}
  \,.
\end{equation}

iv) Polya-Szeg\"{o} inequality. Assume that $M$ satisfies the $h$
isoperimetric inequality \eqref{eq:isoper_a}. Then
\begin{equation}
  \label{eq:prel_ps}
  \int_{0}^{\infty}
  h(s)^{p}
  \left(  -\der{u^{*}}{s}(s)\right)^{p}
  \di s
  \le
  \int_{M}
  \abs{\grad u}^{p}
  \di\mu
  \,.
\end{equation}
We give a short proof of \eqref{eq:prel_ps}, for the readers's convenience.
First note that setting in \eqref{eq:prel_fed} $v=1$, we get
\begin{equation}
  \label{eq:prel_ps_i}
  P(t):=
  \Abs{
    \{  \abs{ u} >t\}
  }_{N-1}
  =
  -
  \der{}{t}
  \int_{\abs{ u} >t}
  \abs{ \grad u}
  \di\mu
  \,.
\end{equation}
Next, on applying H\"{o}lder inequality we obtain
\begin{multline*}
  \frac{1}{\eps}
  \int_{t<\abs{u}\le t+\eps}
  \abs{\grad u}
  \di\mu
  \le
  \left(
    \frac{1}{\eps}
    \int_{t<\abs{ u} \le t+\eps}
    \abs{\grad u}^{p}
    \di\mu
  \right)^{1/p}
  \\
  \left(
    \frac{1}{\eps}
    \Abs{
      \{  t<\abs{ u} \le t+\eps\}
    }
  \right)^{(p-1)/p}
  \,.
\end{multline*}
Letting $\eps\to 0$ in this inequality and noting that for $q\ge 1$
\begin{equation*}
  \lim_{\eps\to 0}
  \frac{1}{\eps}
  \int_{t<\abs{u} \le t+\eps}
  \abs{\grad u}^{q}
  \di\mu
  =
  -
  \der{}{t}
  \int_{t<\abs{ u} }
  \abs{\grad u}^{q}
  \di\mu
  \,,
\end{equation*}
and by using also \eqref{eq:prel_ps_i} we have that
\begin{equation}
  \label{eq:prel_ps_ii}
  P(t)
  \le
  \left(
    -
    \der{}{t}
    \int_{t<\abs{ u}}
    \abs{\grad u}^{p}
    \di\mu
  \right)^{1/p}
  \left(
    -
    \der{}{t}
    \nu(t)
  \right)^{(p-1)/p}
  \,.
\end{equation}
Using now \eqref{eq:isoper_a}, we get from \eqref{eq:prel_ps_ii}
\begin{equation}
  \label{eq:prel_ps_iii}
  h(\nu(t))^{p}
  \left(
    -
    \der{}{t}
    \nu(t)
  \right)^{-(p-1)}
  \le
  -
  \der{}{t}
  \int_{t<\abs{ u} }
  \abs{\grad u}^{p}
  \di\mu
  \,.
\end{equation}
Set $\nu(t)=s$, then $t=u^{*}(s)$, $\nu_{t}(t)=(u_{s}^{*}(s))^{-1}$ a.e.,
and therefore from \eqref{eq:prel_ps_iii} we get
\begin{equation*}
  h(s)^{p}
  \left(
    -
    \der{}{s}
    u^{*}(s)
  \right)^{p}
  \le
  \der{}{s}
  \int_{\abs{u} >u^{*}(s)}
  \abs{\grad u}^{p}
  \di\mu
  \,.
\end{equation*}
Integrating the last inequality between $0$ and $\infty$, we arrive at \eqref{eq:prel_ps}.

\begin{proof}[Proof of Theorem~\ref{t:sobol}]
By definition of
decreasing rearrangement we have
\begin{equation*}
  \left(
    \int_{M}
    u^{*}(s)^{p^{*}}
    \big(
    w(V(d(x)))^{-p^{*}}
    \big)^{*}
    \di s
  \right)^{1/p^{*}}
  =
  \left(
    \int_{M}
    u^{*}(s)^{p^{*}}
    w(s)^{-p^{*}}
    \di s
  \right)^{1/p^{*}}
  \,.
\end{equation*}
Therefore, by the Hardy-Littlewood inequality we obtain
\begin{equation}
  \label{eq:p_sobol_i}
  \left(
    \int_{M}
    \abs{ u}^{p^{*}}
    w(V(d(x)))^{-p^{*}}
    \di\mu
  \right)^{1/p^{*}}
  \le
  \left(
    \int_{0}^{\infty}
    \big[  u^{*}(s)\big]^{p^{*}}
    w(s)^{-p^{*}}
    \di s
  \right)^{1/p^{*}}
  \,.
\end{equation}
Next, by the Polya-Szeg\"{o} principle
\begin{equation}
  \label{eq:p_sobol_ii}
  \left(
    \int_{M}
    \abs{ \grad u}^{p}
    \di\mu
  \right)^{1/p}
  \ge
  \left(
    \int_{0}^{\infty}
    \big[  -u_{s}^{*}(s)\big]^{p^{*}}
    h(s)^{p}
    \di s
  \right)^{1/p}
  \,.
\end{equation}
Combining now \eqref{eq:p_sobol_i} and \eqref{eq:p_sobol_ii}, we deduce
\begin{equation*}
  \frac{
    \left(
      \int_{M}
      \abs{ \grad u}^{p}
      \di\mu
    \right)^{1/p}
  }{
    \left(
      \int_{M}
      \abs{ u}^{p^{*}}
      w(V(d(x)))^{-p^{*}}
      \di\mu
    \right)^{1/p^{*}}}
  \ge
  \frac{
    \left(
      \int_{0}^{\infty}
      [  -u_{s}^{*}(s)]^{p}
      h(s)^{p}
      \di s
    \right)^{1/p}
  }{
    \left(
      \int_{0}^{\infty}
      [  u^{*}(s)]^{p^{*}}
      w(s)^{-p^{*}}
      \di s
    \right)^{1/p^{*}}
  }
  \,.
\end{equation*}
In order to apply \eqref{eq:prel_lmm_m} let
\begin{gather}
  \label{eq:p_sobol_k}
  q=p^{*}
  \,,
  \quad
  u^{*}(s)
  =
  \int_{s}^{+\infty}
  f(y)
  \frac{\di y}{h(y)^{p/(p-1)}}
  \,,
  \\
  \label{eq:p_sobol_kk}
  \text{i.e.,}
  \quad
  f(s)
  =
  -u_{s}^{*}(s)
  h(s)^{p/(p-1)}
  \,.
\end{gather}
Then \eqref{eq:prel_lmm_m} implies the inequality
\begin{equation*}
  \left(
    \int_{0}^{\infty}
    \big[  u^{*}(s)\big]^{p^{*}}
    w(s)^{-p^{*}}
    \di s
  \right)^{1/p^{*}}
  \le
  A
  \left(
    \int_{0}^{\infty}
    (-u_{s}^{*}(s))^{p}
    h(s)^{p}
    \di s
  \right)^{1/p}
  \,,
\end{equation*}
that is the desired result \eqref{eq:sobol_n}, when we replace the best constant $A$ with its sharp estimate as in Section~\ref{s:hardy}.
\\
Let us prove \eqref{eq:sobol_nn} proceeding in the same way.
Note that $(1/d(\cdot)^{p})^{*}(s)=(V^{(-1)}(s))^{-p}$; then
we have by
the Hardy-Littlewood inequality that
\begin{equation*}
  \int_{M}
  \frac{\abs{ u}^{p}}{d(x)^{p}}
  \di\mu
  \le
  \int_{0}^{\infty}
  \big(  u^{*}(\tau)\big)^{p}
  (1/d(\cdot)^{p})^{*}(\tau)
  \di\tau
  =
  \int_{0}^{\infty}
  \frac{\left(  u^{*}(\tau)\right)^{p}\di\tau}{\left(  V^{(-1)}(\tau)\right)^{p}}
  \,.
\end{equation*}
On selecting $f$ as in \eqref{eq:p_sobol_k}--\eqref{eq:p_sobol_kk},
we have from \eqref{eq:prel_lmm_mm} that
\begin{equation*}
  \int_{0}^{\infty}
  \frac{(  u^{*}(\tau))^{p}\di\tau}{(  V^{(-1)}(\tau))^{p}}
  \le
  A^{p}
  \int_{0}^{\infty}
  h(\tau)^{p}
  \left(  -u_{\tau}^{*}(\tau)\right)^{p}
  \di\tau
  \,.
\end{equation*}
Finally, by making use of the Polya-Szeg\"{o} inequality \eqref{eq:prel_ps} we arrive at \eqref{eq:sobol_nn}.
\end{proof}

\begin{proof}[Proof of Corollary~\ref{co:hsobol}]
  Indeed, on applying H\"{o}lder inequality (splitting the exponent of $\abs{u}$ as $q+p^{*}(q)-q$) we have
  \begin{equation*}
    \begin{split}
    &\int\limits_{M}
    \frac{\abs{ u}^{p^{*}(q)}}{d(x)^{q}}
    w(V(d(x)))^{-(p^{*}(q)-q)}
    \di \mu
    \\
    &\quad
    \le
    \left(
      \int\limits_{M}
      \frac{\abs{ u}^{p}}{d(x)^{p}}
      \di\mu
    \right)^{q/p}
    \left(
      \int\limits_{M}
      \abs{u}^{p^{*}}
      w(V(d(x)))^{-p^{*}}
      \di\mu
    \right)^{(p-q)/p}
    \\
    &\quad\le
    C_{1}^{N(p-q)/(N-p)}
    C_{2}^{q/p}\left(
      \int\limits_{M}
      \abs{\grad u}^{p}
      \di\mu
    \right)^{(N-q)/(N-p)}
    \,,
    \end{split}
  \end{equation*}
  where in last inequality we used \eqref{eq:sobol_n}, \eqref{eq:sobol_nn}.
\end{proof}

\def\cprime{$'$}


\begin{thebibliography}{10}

\bibitem{Andreucci:Tedeev:2021}
D.~Andreucci and A.~Tedeev.
\newblock Extinction in a finite time for parabolic equations of fast diffusion
  type on manifolds.
\newblock In A.~G. Kusraev and Z.~D. Totieva, editors, {\em Operator Theory and
  Differential Equations}, Trends in Mathematics, pages 1--6. Birkh\:{a}user,
  Basel, 2021.

\bibitem{Andreucci:Tedeev:2000}
D.~Andreucci and A.~F. Tedeev.
\newblock Sharp estimates and finite speed of propagation for a {N}eumann
  problem in domains narrowing at infinity.
\newblock {\em Advances Diff. Eqs.}, 5:833--860, 2000.
\newblock Khayyam Publ., Athens Ohio (U.S.A.).

\bibitem{Andreucci:Tedeev:2015}
D.~Andreucci and A.~F. Tedeev.
\newblock Optimal decay rate for degenerate parabolic equations on noncompact
  manifolds.
\newblock {\em Methods Appl. Anal.}, 22(4):359--376, 2015.

\bibitem{Aubin:1976}
T.~Aubin.
\newblock Probl\`{e}mes isop\'{e}rim\'{e}triques et espaces de {S}obolev.
\newblock {\em J. Differential Geometry}, 11(4):573--598, 1976.

\bibitem{AubinNPRG}
T.~Aubin.
\newblock {\em Some nonlinear problems in {R}iemannian geometry}.
\newblock Springer Monographs in Mathematics. Springer-Verlag, Berlin, 1998.

\bibitem{Berchio:Ganguly:Grillo:2017}
E.~Berchio, D.~Ganguly, and G.~Grillo.
\newblock Sharp {P}oincar\'{e}-{H}ardy and {P}oincar\'{e}-{R}ellich
  inequalities on the hyperbolic space.
\newblock {\em J. Funct. Anal.}, 272(4):1661--1703, 2017.

\bibitem{Bliss:1930}
G.~A. Bliss.
\newblock An {I}ntegral {I}nequality.
\newblock {\em J. London Math. Soc.}, 5(1):40--46, 1930.

\bibitem{Burago:Zalgaller:1988}
Y.~D. Burago and V.~A. Zalgaller.
\newblock {\em Geometric inequalities}, volume 285 of {\em Grundlehren der
  Mathematischen Wissenschaften [Fundamental Principles of Mathematical
  Sciences]}.
\newblock Springer-Verlag, Berlin, 1988.
\newblock Translated from the Russian by A. B. Sosinski\u{\i}, Springer Series
  in Soviet Mathematics.

\bibitem{ChavelII}
I.~Chavel.
\newblock {\em Isoperimetric inequalities}, volume 145 of {\em Cambridge Tracts
  in Mathematics}.
\newblock Cambridge University Press, Cambridge, 2001.
\newblock Differential geometric and analytic perspectives.

\bibitem{Chen:2015}
M.-F. Chen.
\newblock The optimal constant in {H}ardy-type inequalities.
\newblock {\em Acta Math. Sin. (Engl. Ser.)}, 31(5):731--754, 2015.

\bibitem{Cianchi:2007}
A.~Cianchi.
\newblock Symmetrization in anisotropic elliptic problems.
\newblock {\em Comm. Partial Differential Equations}, 32(4-6):693--717, 2007.

\bibitem{Cordero:Nazaret:Villani:2004}
D.~Cordero-Erausquin, B.~Nazaret, and C.~Villani.
\newblock A mass-transportation approach to sharp {S}obolev and
  {G}agliardo-{N}irenberg inequalities.
\newblock {\em Adv. Math.}, 182(2):307--332, 2004.

\bibitem{DAmbrosio:Dipierro:2014}
L.~D'Ambrosio and S.~Dipierro.
\newblock Hardy inequalities on {R}iemannian manifolds and applications.
\newblock {\em Ann. Inst. H. Poincar\'{e} Anal. Non Lin\'{e}aire},
  31(3):449--475, 2014.

\bibitem{Federer:Fleming:1960}
H.~Federer and W.~H. Fleming.
\newblock Normal and integral currents.
\newblock {\em Ann. of Math. (2)}, 72:458--520, 1960.

\bibitem{Grigoryan:1999a}
A.~Grigor{\cprime}yan.
\newblock Isoperimetric inequalities and capacities on {R}iemannian manifolds.
\newblock In {\em The {M}az\cprime ya anniversary collection, {V}ol. 1
  ({R}ostock, 1998)}, volume 109 of {\em Oper. Theory Adv. Appl.}, pages
  139--153. Birkh\"auser, Basel, 1999.

\bibitem{Hebey:Vaugon:1995}
E.~Hebey and M.~Vaugon.
\newblock The best constant problem in the {S}obolev embedding theorem for
  complete {R}iemannian manifolds.
\newblock {\em Duke Math. J.}, 79(1):235--279, 1995.

\bibitem{Ledoux:1999}
M.~Ledoux.
\newblock On manifolds with non-negative {R}icci curvature and {S}obolev
  inequalities.
\newblock {\em Comm. Anal. Geom.}, 7(2):347--353, 1999.

\bibitem{Li:Mao:2020}
Y.~Li and Y.-H. Mao.
\newblock The optimal constant in generalized {H}ardy's inequality.
\newblock {\em Math. Inequal. Appl.}, 23(1):257--266, 2020.

\bibitem{Martin:Milman:2014}
J.~Mart\'{\i}n and M.~Milman.
\newblock Towards a unified theory of {S}obolev inequalities.
\newblock In {\em Special functions, partial differential equations, and
  harmonic analysis}, volume 108 of {\em Springer Proc. Math. Stat.}, pages
  163--201. Springer, Cham, 2014.

\bibitem{Mazja:ss}
V.~G. Maz{\cprime}ya.
\newblock {\em Sobolev Spaces}.
\newblock Springer Series in Soviet Mathematics. Springer-Verlag, Berlin,
  Germany, 1985.

\bibitem{Miklyukov:Vuorinen:1999}
V.~M. Miklyukov and M.~K. Vuorinen.
\newblock Hardy's inequality for {$W^{1,p}_0$}-functions on {R}iemannian
  manifolds.
\newblock {\em Proc. Amer. Math. Soc.}, 127(9):2745--2754, 1999.

\bibitem{Minerbe:2009}
V.~Minerbe.
\newblock Weighted {S}obolev inequalities and {R}icci flat manifolds.
\newblock {\em Geom. Funct. Anal.}, 18(5):1696--1749, 2009.

\bibitem{Nazarov:2011}
A.~I. Nazarov.
\newblock The {H}ardy-{S}obolev inequalities in a cone.
\newblock {\em Algebra i Analiz}, 22(6):200--213, 2010.

\bibitem{Opic:Kufner:hardy}
B.~Opic and A.~Kufner.
\newblock {\em Hardy-type inequalities}, volume 219 of {\em Pitman Research
  Notes in Mathematics Series}.
\newblock Longman Scientific \& Technical, Harlow, 1990.

\bibitem{Pedrosa:Ritore:1999}
R.~H.~L. Pedrosa and M.~Ritor\'{e}.
\newblock Isoperimetric domains in the {R}iemannian product of a circle with a
  simply connected space form and applications to free boundary problems.
\newblock {\em Indiana Univ. Math. J.}, 48(4):1357--1394, 1999.

\bibitem{Petean:Ruiz:2011}
J.~Petean and J.~M. Ruiz.
\newblock Isoperimetric profile comparisons and {Y}amabe constants.
\newblock {\em Ann. Global Anal. Geom.}, 40(2):177--189, 2011.

\bibitem{Ritore:Vernadakis:2017}
M.~Ritor\'{e} and E.~Vernadakis.
\newblock Large isoperimetric regions in the product of a compact manifold with
  {E}uclidean space.
\newblock {\em Adv. Math.}, 306:958--972, 2017.

\bibitem{Ros:2005}
A.~Ros.
\newblock The isoperimetric problem.
\newblock In {\em Global theory of minimal surfaces}, volume~2 of {\em Clay
  Math. Proc.}, pages 175--209. Amer. Math. Soc., Providence, RI, 2005.

\bibitem{Ruiz:Vazquez:2020}
J.~M. Ruiz and A.~V. Juarez.
\newblock Isoperimetric estimates in low dimensional riemannian products, 2020.

\bibitem{Talenti:1976b}
G.~Talenti.
\newblock Best constant in {S}obolev inequality.
\newblock {\em Ann. Mat. Pura Appl. (4)}, 110:353--372, 1976.

\bibitem{Troyanov:1999}
M.~Troyanov.
\newblock Parabolicity of manifolds.
\newblock {\em Siberian Adv. Math.}, 9(4):125--150, 1999.

\bibitem{Xia:2001}
C.~Xia.
\newblock Complete manifolds with nonnegative {R}icci curvature and almost best
  {S}obolev constant.
\newblock {\em Illinois J. Math.}, 45(4):1253--1259, 2001.

\end{thebibliography}
\end{document}